\documentclass{amsart}

\newtheorem{theorem}{Theorem}[section]
\newtheorem{lemma}[theorem]{Lemma}

\theoremstyle{definition}

\theoremstyle{remark}

\numberwithin{equation}{section}

\usepackage{url}
\newtheorem{proposition}{Proposition}
\usepackage{amssymb}

\begin{document}

\title{Locating a shortest vector in certain $2$-dimensional lattices}


\author{GuiXian ZOU}
\address{}
\curraddr{School of Mathematics and Statistics, Henan University, Kaifeng 475001, China}
\email{guixian\_zou@henu.edu.cn}
\thanks{}



\keywords{SVP, $2$-dimensional lattice, Qin's algorithm}



\begin{abstract}
Let $a$, $m$ be positive integers, $1<a<m$, $\gcd(a,m)=1$. We determine the location of a shortest vector in the $2$-dimensional lattices
$$
\Lambda(a,m) = \{(x, y)\in\mathbb{Z}\times\mathbb{Z}\mid ax + y\equiv 0~(\bmod\,m)\}.
$$
This confirms a conjecture of Han Wu and Guangwu Xu.
\end{abstract}

\maketitle

\section{Introduction}

In 1247, Qin Jiushao, a great mathematician of Song Dynasty, completed  {\sl Mathematical Treatise in Nine Sections}  (see  Libbrect \cite{L1973}), in which he introduced the method of ``DaYan aggregation'' to solve a system of linear congruences. As the key step of Qin's method of ``DaYan aggregation'', the method of ``DaYan deriving one'' is created for computing a modular inverse. Specifically, for positive integers $a$ and $m$ satisfying $1<a<m$ and $\gcd(a,m)=1$, Qin described an algorithm to obtain a positive integer $u$ such that
\begin{equation}\label{eq.u}
au \equiv 1 \pmod{m}, \quad 1 < u < m.
\end{equation}

Guangwu Xu and his coauthors \cite{XL2016,Xu2018,WX2023,Xu2024} provided a faithful interpretation of Qin's algorithm in the context of modern algorithmic number theory. They established some useful properties, and revealed some unique features that are different from the extended Euclidean algorithm. Particularly, Wu and Xu \cite{WX2023} explored the application of Qin's algorithm to finding a shortest non-zero vector in the following $2$-dimensional lattices
\begin{equation}\label{eq.2dim.lattice}
\Lambda(a,m):=\{(x,y)\in\mathbb{Z}\times\mathbb{Z}\mid ax+y\equiv 0~(\bmod\,m)\},
\end{equation}
where $a,m$ are positive integers, $1<a<m$, $\gcd(a,m)=1$. Such $2$-dimensional lattices were shown to be important in the study of the GLV multiplication on elliptic curves, see for example Gallant, Lambert and Vanstone \cite{GLV2001}.

In \cite{WX2023}, the shortest vector problem of the above 2-dimensional lattices $\Lambda(a,m)$ was studied by means of the s-state  $\widehat{\mathcal{X}_k} $ of Qin's algorithm. Based on numerical experiments, it was observed that there may be a specific connection between the shortest vector of $\Lambda(a,m)$ and the row vectors of certain s-states $\{\widehat{\mathcal{X}_k}\}$.

In this paper, we reveal the further connection between Qin's algorithm and the problem of locating a shortest non-zero vector in the $2$-dimensional lattices \eqref{eq.2dim.lattice}.

To state the conclusions of the paper, we first introduce the necessary notation  and conclusions in Wu and Xu \cite{WX2023,Xu2024}.

We denote the state at the $k$-th step of Qin's algorithm as
$$
\mathcal{X}_k =
\begin{pmatrix}
x_{11}^{(k)} & x_{12}^{(k)} \\
x_{21}^{(k)} & x_{22}^{(k)}
\end{pmatrix}.
$$
The initial state is $\mathcal{X}_0 = \begin{pmatrix} 1 & a \\ 0 & m \end{pmatrix}$. Now, since $ x_{12}^{(0)} = a<x_{22}^{(0)} = m $, we perform the division algorithm to obtain the quotient $ q_1 $ and the remainder $ r_1 $:
$$
q_1 = \left\lfloor \frac{x_{22}^{(0)} - 1}{x_{12}^{(0)}} \right\rfloor, \qquad r_1 = x_{22}^{(0)} - q_1 x_{12}^{(0)},
$$
The way of updating the states is as following:
$$
x_{21}^{(1)} \leftarrow x_{21}^{(0)} + q_1 x_{11}^{(0)},\qquad
x_{22}^{(1)} \leftarrow x_{22}^{(0)} - q_1 x_{12}^{(0)}.
$$
At this point, the state  is $\mathcal{X}_1 = \begin{pmatrix} 1 & a \\ q_1 & r_1 \end{pmatrix}$. Now, since $ x_{12}^{(1)} = a > x_{22}^{(1)} = r_1 $, we perform the division algorithm to obtain the quotient $ q_2 $ and the remainder $ r_2 $:
$$
q_2 = \left\lfloor \frac{x_{12}^{(1)} - 1}{x_{22}^{(1)}} \right\rfloor, \qquad r_2 = x_{12}^{(1)} - q_2 x_{22}^{(1)},
$$
The way of updating the states is
$$
x_{11}^{(2)} \leftarrow x_{11}^{(1)} + q_2 x_{21}^{(1)},\qquad
x_{12}^{(2)} \leftarrow x_{12}^{(1)} - q_2 x_{22}^{(1)},
$$
then the state is $\mathcal{X}_2 = \begin{pmatrix} 1+ q_1q_2 & r_2 \\ q_1 & r_1 \end{pmatrix}$.

Generally, we update the second row of the states at odd steps
\begin{equation}\label{eq:odd}
x_{21}^{(k)} \leftarrow x_{21}^{(k-1)} + q_k x_{11}^{(k-1)},\qquad
x_{22}^{(k)} \leftarrow x_{22}^{(k-1)} - q_k x_{12}^{(k-1)};
\end{equation}
and update the first row of the states at even steps
\begin{equation}\label{eq:even}
x_{11}^{(k)} \leftarrow x_{11}^{(k-1)} + q_k x_{21}^{(k-1)},\qquad
x_{12}^{(k)} \leftarrow x_{12}^{(k-1)} - q_k x_{22}^{(k-1)}.
\end{equation}
The algorithm terminates when the element at the upper right corner of the state  becomes 1. Then the element at the upper left corner of the terminating state  is exactly $u$ modulo $m$ as desired in \eqref{eq.u}. We remark that, according to the updating rule of the states, except that the element at the lower left corner of the initial state is 0, all the other elements appeared in each state are positive integers.
	
For the state $\mathcal{X}_k = \begin{pmatrix} x_{11}^{(k)} & x_{12}^{(k)} \\ x_{21}^{(k)} & x_{22}^{(k)} \end{pmatrix} $, Wu and Xu \cite{WX2023} define the corresponding s-state as
\begin{equation}\label{eq.state.matrix}
\widehat{\mathcal{X}_k} = \begin{pmatrix} x_{11}^{(k)} & -x_{12}^{(k)} \\ x_{21}^{(k)} & x_{22}^{(k)} \end{pmatrix}= \begin{pmatrix} \widehat{v}_1^{(k)} \\ \widehat{v}_2^{(k)}\end{pmatrix}.
\end{equation}
	
All the following propositions were established in \cite[p.\,7, pp.\,13--17]{WX2023}.

\begin{proposition}\label{prop1}
Given the initial state $\mathcal{X}_0 = \begin{pmatrix} 1 & a \\ 0 & m \end{pmatrix}$, we have
$$
\widehat{\mathcal{X}_k} =
\begin{cases}
\begin{pmatrix} 1 & 0 \\q_k & 1 \end{pmatrix} \widehat{\mathcal{X}_{k-1}},
& \text{if } k \text{ is odd}, \\[1em]
\begin{pmatrix} 1 & 0 \\q_k & 1 \end{pmatrix}^\top \widehat{\mathcal{X}_{k-1}},
& \text{if } k \text{ is even}.
\end{cases}
$$
where $q_k$ is the quotient computed in the $k$-th step of Qin's algorithm.
\end{proposition}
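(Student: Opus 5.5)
The plan is a direct verification from the update rules \eqref{eq:odd} and \eqref{eq:even}, using only the fact that left multiplication by an elementary matrix is a row operation. Write $\widehat{\mathcal{X}_{k-1}}$ in terms of its rows $\widehat{v}_1^{(k-1)} = (x_{11}^{(k-1)}, -x_{12}^{(k-1)})$ and $\widehat{v}_2^{(k-1)} = (x_{21}^{(k-1)}, x_{22}^{(k-1)})$, as in \eqref{eq.state.matrix}. For any $q$ we have
$$
\begin{pmatrix} 1 & 0 \\ q & 1 \end{pmatrix}\begin{pmatrix} \widehat{v}_1 \\ \widehat{v}_2 \end{pmatrix} = \begin{pmatrix} \widehat{v}_1 \\ \widehat{v}_2 + q\widehat{v}_1 \end{pmatrix},
\qquad
\begin{pmatrix} 1 & q \\ 0 & 1 \end{pmatrix}\begin{pmatrix} \widehat{v}_1 \\ \widehat{v}_2 \end{pmatrix} = \begin{pmatrix} \widehat{v}_1 + q\widehat{v}_2 \\ \widehat{v}_2 \end{pmatrix}.
$$
It therefore suffices to show two things: at odd steps the new s-state is obtained by adding $q_k$ times the first row to the second; at even steps it is obtained by adding $q_k$ times the second row to the first.

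For odd $k$, the first row of $\mathcal{X}_k$ is left unchanged, so $\widehat{v}_1^{(k)} = \widehat{v}_1^{(k-1)}$. By \eqref{eq:odd},
$$
\widehat{v}_2^{(k)} = \bigl(x_{21}^{(k-1)} + q_k x_{11}^{(k-1)},\; x_{22}^{(k-1)} - q_k x_{12}^{(k-1)}\bigr) = \widehat{v}_2^{(k-1)} + q_k \widehat{v}_1^{(k-1)}.
$$
This uses the sign convention in $\widehat{v}_1$: the entry $-x_{12}$ is exactly what converts the subtraction in the second coordinate into an addition of rows.

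For even $k$, the second row is unchanged, and by \eqref{eq:even}
$$
\widehat{v}_1^{(k)} = \bigl(x_{11}^{(k-1)} + q_k x_{21}^{(k-1)},\; -(x_{12}^{(k-1)} - q_k x_{22}^{(k-1)})\bigr) = \widehat{v}_1^{(k-1)} + q_k \widehat{v}_2^{(k-1)}.
$$
Here the minus sign in the first row of the s-state is again what makes the update linear and uniform across both coordinates.

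There is no real obstacle. The only points needing care are the sign bookkeeping in the second coordinate, and noting that the row not mentioned in \eqref{eq:odd} or \eqref{eq:even} stays fixed at that step. Both follow from the description of the algorithm, which updates only one row per step, alternating starting with the second row at $k=1$.
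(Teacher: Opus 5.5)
Your verification is correct. It takes essentially the same approach as the paper, which does not prove Proposition~1 itself but cites Wu and Xu and immediately restates it as the row-update rules $\widehat{v}_2^{(k)} = q_k\widehat{v}_1^{(k-1)} + \widehat{v}_2^{(k-1)}$ for odd $k$ and $\widehat{v}_1^{(k)} = q_k\widehat{v}_2^{(k-1)} + \widehat{v}_1^{(k-1)}$ for even $k$; these are exactly what you derive from \eqref{eq:odd} and \eqref{eq:even} using the minus sign in the first row of the s-state.
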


Proposition \ref{prop1} can be restated as follows:

When $k$ is odd, $\widehat{v}_2^{(k)} = q_k\widehat{v}_1^{(k-1)} + \widehat{v}_2^{(k-1)}$, $\widehat{v}_1^{(k)} = \widehat{v}_1^{(k-1)}$;

When $k$ is even, $\widehat{v}_1^{(k)} = q_k\widehat{v}_2^{(k-1)} + \widehat{v}_1^{(k-1)}$, $\widehat{v}_2^{(k)} = \widehat{v}_2^{(k-1)}$.

For the convenience of subsequent proofs, we define $\delta_k\in\{0,1\}$ by
$$
\delta_k\equiv k~(\bmod\,2).
$$
Then we have
\begin{equation}\label{eq.vector update}
\widehat{v}_{1+\delta_k}^{(k)} = \widehat{v}_{1+\delta_k}^{(k-1)} + q_k \widehat{v}_{2-\delta_k}^{(k-1)}, \qquad \widehat{v}_{2-\delta_k}^{(k)} = \widehat{v}_{2-\delta_k}^{(k-1)}.
\end{equation}

For any vector $(x,y)\in\mathbb{Z}\times\mathbb{Z}$ in the 2-dimensional lattices $\Lambda(a,m)$, we define $\|(x,y)\|^2=x^2+y^2$, i.e., the square of the distance from the lattice point $(x,y)$ to the origin, which will be directly referred to as the squared length of the vector $(x,y)$ in what follows. Denote the inner product of the row vectors of the s-state  $\widehat{\mathcal{X}_k}$ by
\begin{equation}\label{eq.inner.product}
\mathcal{I}_k = \langle \widehat{v}_1^{(k)}, \widehat{v}_2^{(k)} \rangle = x_{11}^{(k)} x_{21}^{(k)} - x_{12}^{(k)} x_{22}^{(k)}.
\end{equation}

\begin{proposition}\label{prop2}
Each s-state  $ \widehat{\mathcal{X}_k} $ of Qin's algorithm is a basis of $ \Lambda(a,m) $. In particular, the volume of the lattice $ \Lambda(a,m) $ is exactly $ m $.
\end{proposition}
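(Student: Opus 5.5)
The plan is an induction on $k$, using the update rule of Proposition \ref{prop1} to move from one s-state to the next. The argument has three ingredients: membership of the rows in $\Lambda(a,m)$, the value of the determinant, and a standard index argument for sublattices.

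\textbf{Membership.} We first show that both rows of every $\widehat{\mathcal{X}_k}$ lie in $\Lambda(a,m)$.
\begin{itemize}
\item For $k=0$ the rows are $(1,-a)$ and $(0,m)$. We have $a\cdot 1+(-a)=0$ and $a\cdot 0+m=m$, both divisible by $m$.
\item $\Lambda(a,m)$ is a subgroup of $\mathbb{Z}^2$, since the condition $ax+y\equiv 0 \pmod m$ is linear.
\item By \eqref{eq.vector update}, each row of $\widehat{\mathcal{X}_k}$ is either a row of $\widehat{\mathcal{X}_{k-1}}$ or a row plus an integer multiple $q_k$ of the other row. Induction on $k$ therefore keeps both rows inside $\Lambda(a,m)$.
\end{itemize}

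\textbf{Determinant.} By Proposition \ref{prop1}, $\widehat{\mathcal{X}_k}$ is obtained from $\widehat{\mathcal{X}_{k-1}}$ by left multiplication with $\begin{pmatrix}1&0\\q_k&1\end{pmatrix}$ or its transpose. Both matrices have determinant $1$, so
\[
\det\widehat{\mathcal{X}_k}=\det\widehat{\mathcal{X}_0}=1\cdot m-(-a)\cdot 0=m .
\]
Equivalently, these matrices lie in $\mathrm{SL}_2(\mathbb{Z})$, so all the $\widehat{\mathcal{X}_k}$ generate the same lattice. It therefore suffices to treat $\widehat{\mathcal{X}_0}$, though the determinant argument works uniformly in $k$.

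\textbf{Generation.} It remains to show that the rows generate all of $\Lambda(a,m)$, not merely a sublattice. Consider the surjective homomorphism $\mathbb{Z}^2\to\mathbb{Z}/m\mathbb{Z}$ given by $(x,y)\mapsto ax+y \bmod m$. It is onto because $(0,1)\mapsto 1$. Its kernel is $\Lambda(a,m)$, so $[\mathbb{Z}^2:\Lambda(a,m)]=m$.

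On the other hand, the rows of $\widehat{\mathcal{X}_k}$ span a sublattice $L\subseteq\Lambda(a,m)$ of index $|\det\widehat{\mathcal{X}_k}|=m$ in $\mathbb{Z}^2$. Since $L\subseteq\Lambda(a,m)$ and both have index $m$ in $\mathbb{Z}^2$, we get $L=\Lambda(a,m)$.

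Alternatively, one can argue directly at $k=0$: given $(x,y)\in\Lambda(a,m)$, write
\[
(x,y)=x(1,-a)+\frac{ax+y}{m}\,(0,m),
\]
where $\frac{ax+y}{m}$ is an integer by the defining congruence. Then unimodularity transfers the conclusion to every $k$. This direct decomposition is probably the cleanest route and avoids citing the index fact.

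\textbf{Volume.} Since $\widehat{\mathcal{X}_k}$ is a basis of $\Lambda(a,m)$, the volume is $|\det\widehat{\mathcal{X}_k}|=m$. The only point requiring care is the sign convention in the s-state: the entry $-x_{12}^{(k)}$ must be compatible with the update rule \eqref{eq:odd}–\eqref{eq:even}. Proposition \ref{prop1} already records this, so there is no real obstacle.
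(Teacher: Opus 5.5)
Your proof is correct. The decomposition $(x,y)=x(1,-a)+\frac{ax+y}{m}(0,m)$ shows that $\widehat{\mathcal{X}_0}$ is a basis of $\Lambda(a,m)$, and the determinant-one matrices of Proposition~\ref{prop1} carry this to every $\widehat{\mathcal{X}_k}$ with $|\det\widehat{\mathcal{X}_k}|=m$. Your membership-plus-index argument is a valid but redundant second route, and $\gcd(a,m)=1$ is never used.

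The paper gives no proof of its own here; it imports the statement from Wu and Xu. Your unimodular-update argument is the route suggested by stating it right after Proposition~\ref{prop1}, so there is nothing to add.
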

	
\begin{proposition}\label{prop3}
There exists an s-state  $\widehat{\mathcal{X}_k} = \begin{pmatrix} \widehat{v}_1^{(k)} \\ \widehat{v}_2^{(k)} \end{pmatrix}$ such that the following set
$$
\left\{ \widehat{v}_1^{(k)}, \widehat{v}_2^{(k)}, \widehat{v}_1^{(k)} + \widehat{v}_2^{(k)}, \widehat{v}_1^{(k)} - \widehat{v}_2^{(k)} \right\}
$$
contains the shortest vector of $\Lambda(a,m)$.
\end{proposition}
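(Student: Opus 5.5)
We will use the standard reduction criterion for two-dimensional lattices. If a basis $(b_1,b_2)$ of a $2$-dimensional lattice satisfies $|\langle b_1,b_2\rangle|\le \tfrac12\min(\|b_1\|^2,\|b_2\|^2)$, then the shorter of $b_1,b_2$ is a shortest nonzero vector. This is Lagrange--Gauss reduction. The plan is to find an index $k$ for which some basis obtained from $\widehat{\mathcal{X}_k}$ by a single sum or difference is reduced. By Proposition \ref{prop2}, every $\widehat{\mathcal{X}_k}$ is a basis of $\Lambda(a,m)$, and so is any basis obtained from it by $b_2\mapsto b_2\pm b_1$. Reducedness of such a basis then places a shortest vector in the set $\{\widehat v_1^{(k)},\widehat v_2^{(k)},\widehat v_1^{(k)}\pm\widehat v_2^{(k)}\}$.

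First we track the sign and monotonicity of the inner product $\mathcal I_k$ along the algorithm. By the remark after the updating rules, all entries are positive except $x^{(0)}_{21}=0$. Hence $\widehat v_1^{(k)}=(x_{11}^{(k)},-x_{12}^{(k)})$ lies in the fourth quadrant and $\widehat v_2^{(k)}=(x_{21}^{(k)},x_{22}^{(k)})$ lies in the first quadrant. The first coordinates $x_{11},x_{21}$ are nondecreasing in $k$, while the second coordinates $x_{12},x_{22}$ are nonincreasing and end at $x_{12}=1$.

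We have $\mathcal I_0=-am<0$. At the terminal state the second coordinate of $\widehat v_1$ is $-1$, while its first coordinate $u$ has grown, so one expects $\mathcal I_k>0$ at or near the end. Using \eqref{eq.vector update}, one computes
$$\mathcal I_k=\mathcal I_{k-1}+q_k\|\widehat v_{2-\delta_k}^{(k-1)}\|^2 .$$
Thus $\mathcal I_k$ is strictly increasing in $k$. Either some $\mathcal I_k=0$, in which case the basis is orthogonal and trivially reduced, or there is a unique index $k$ with $\mathcal I_{k-1}<0<\mathcal I_k$. The end-of-run case, where $\mathcal I$ never becomes positive, needs a separate check using $x_{12}=1$ at termination.

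Next we handle the crossing step. Write $w=\widehat v_{2-\delta_k}^{(k-1)}$ (the unchanged vector) and $v=\widehat v_{1+\delta_k}^{(k-1)}$. Consider the one-parameter family $v+jw$ for $0\le j\le q_k$; the inner products $\langle v+jw,w\rangle=\mathcal I_{k-1}+j\|w\|^2$ increase linearly in $j$ with step $\|w\|^2$. Choose $j^*$ minimizing $|\langle v+jw,w\rangle|$, which gives $|\langle v+j^*w,w\rangle|\le \tfrac12\|w\|^2$. This is half of the Gauss condition; we must also secure it relative to $\|v+j^*w\|^2$, or else swap roles.

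The hard part is that $j^*$ is generally an intermediate value, not $0$ or $q_k$, whereas the proposition only allows $\widehat v^{(k)}$ and one sum or difference. So the real claim is that $j^*\in\{0,1,q_k-1,q_k\}$, or that otherwise the shortest vector is $w$ itself. I would try to prove the latter. The two coordinates of $v+jw$ move monotonically in opposite directions: one coordinate's magnitude decreases while the other increases. For intermediate $j$, I expect $\|v+jw\|\ge\|w\|$ to follow from the quotient choice $q_k=\lfloor (x-1)/y\rfloor$, which keeps the remainder positive. In that situation $(w,v+j^*w)$ is reduced with $w$ shortest, and $w=\widehat v_{2-\delta_k}^{(k)}$ is a row of $\widehat{\mathcal X_k}$.

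The remaining cases are $j^*$ near $0$ or $q_k$, where the shorter vector may be $v+j^*w$ itself. Here $v+j^*w$ equals $\widehat v^{(k-1)}_{1+\delta_k}$ or $\widehat v^{(k-1)}_{1+\delta_k}+w$ when $j^*\in\{0,1\}$, and equals $\widehat v^{(k)}_{1+\delta_k}$ or $\widehat v^{(k)}_{1+\delta_k}-w$ when $j^*\in\{q_k-1,q_k\}$. These are covered by the set in the statement for the state $k-1$ or the state $k$, respectively. Verifying the length comparison $\|v+jw\|\ge\|w\|$ for the intermediate $j$, via coordinate sign bookkeeping, is the main obstacle.
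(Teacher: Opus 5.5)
The paper does not prove this proposition; it quotes it from Wu--Xu. Your argument therefore has to stand on its own, and as written it has a genuine gap in the cases you treat as routine.

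The step you call the main obstacle is easy. For $1\leqslant j\leqslant q_k-1$, each coordinate of $v+jw$ dominates the corresponding coordinate of $w$ in absolute value. For odd $k$, $v+jw=(x_{21}^{(k-1)}+jx_{11}^{(k-1)},\,x_{22}^{(k-1)}-jx_{12}^{(k-1)})$ with $x_{22}^{(k-1)}-jx_{12}^{(k-1)}\geqslant r_k+x_{12}^{(k-1)}$; even $k$ is symmetric. So $\|v+jw\|>\|w\|$, and $w$ is shortest whenever $1\leqslant j^*\leqslant q_k-1$.

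The real problem is $j^*\in\{0,q_k\}$ with $\|v+j^*w\|<\|w\|$. There you only have $|\langle v+j^*w,w\rangle|\leqslant\tfrac12\|w\|^2$, not $\leqslant\tfrac12\|v+j^*w\|^2$. So $(w,v+j^*w)$ need not be reduced, and nothing you wrote shows that $v+j^*w$, or anything in the four-element set, is shortest. ``Swap roles'' starts another reduction step whose output you do not control. This case does occur. Take $a=5$, $m=24$:
\begin{itemize}
\item $q_1=4$, giving rows $(1,-5),(4,4)$ with $\mathcal{I}_1=-16$;
\item then $q_2=1$, giving rows $(5,-1),(4,4)$ with $\mathcal{I}_2=16$.
\end{itemize}
Here $w=(4,4)$ has $\|w\|^2=32$. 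Both $v=(1,-5)$ and $v+w=(5,-1)$ have squared length $26$, and $|\langle v+jw,w\rangle|=16>13$. So neither choice of $j^*$ gives a reduced pair.

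The missing ingredient is a second sign condition. Because the remainders are positive, all $v+jw$ with $0\leqslant j\leqslant q_k$ lie in one closed quadrant, so $\langle v+jw,\,v+(j+1)w\rangle\geqslant0$. Let $j_0$ be the index with $\langle v+j_0w,w\rangle<0\leqslant\langle v+(j_0+1)w,w\rangle$, and set $b_1=v+j_0w$, $b_2=w$, $b_0=-(v+(j_0+1)w)$. Then $b_0+b_1+b_2=0$ and $\langle b_i,b_l\rangle\leqslant0$ for $i\neq l$. The identity $\|\lambda_0b_0+\lambda_1b_1+\lambda_2b_2\|^2=\sum_{i<l}(-\langle b_i,b_l\rangle)(\lambda_i-\lambda_l)^2$ then shows that every nonzero lattice vector is at least as long as some $b_i$.

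Hence a shortest vector is among $w$, $v+j_0w$, $v+(j_0+1)w$. Combined with the length bound above, it is one of:
\begin{itemize}
\item $w$ itself;
\item an element of $\{v,w,v+w\}$, the set for state $k-1$, when $j_0=0$;
\item a row of $\widehat{\mathcal{X}_k}$, when $j_0+1=q_k$.
\end{itemize}
In the example this yields $(1,-5)$.

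You also leave the non-crossing case open. It is handled the same way by continuing the family at the terminal state. Take $w=\widehat v_1^{(N)}=(u,-1)$, $v=\widehat v_2^{(N)}$, and $v+jw$ for $0\leqslant j\leqslant x_{22}^{(N)}$. Since $\det\widehat{\mathcal{X}_N}=m$, the last member is $(m,0)$, with $\langle(m,0),w\rangle=mu>0$. All members with $j\geqslant1$ have squared length at least $u^2+1=\|w\|^2$.
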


According to Proposition \ref{prop3}, Qin's algorithm guarantees that the shortest non-zero vector could be found from the states. Clearly, brute-force search is inefficient. Therefore, it is naturally to consider a suitable indicator, then by which we can locate effectively a shortest non-zero vector in the states. We shall prove that the inner product is exactly such an effective indicator.
	
\begin{proposition}\label{prop4}
For any $k>0$, we have
\begin{equation}\label{eq.recursive formula}
\mathcal{I}_k = \mathcal{I}_{k-1} + q_k \| \widehat{v}_{2-\delta_k}^{(k)} \|^2 = \mathcal{I}_{k-1} + q_k \| \widehat{v}_{2-\delta_k}^{(k-1)} \|^2.
\end{equation}
\end{proposition}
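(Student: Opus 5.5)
The plan is to compute directly from the vector form \eqref{eq.vector update} of Proposition \ref{prop1}, using bilinearity of the inner product. Fix $k>0$ and write $i=1+\delta_k$, $j=2-\delta_k$. Then $\{i,j\}=\{1,2\}$, and at step $k$ only the $i$-th row changes:
\[
\widehat{v}_i^{(k)}=\widehat{v}_i^{(k-1)}+q_k\widehat{v}_j^{(k-1)},\qquad \widehat{v}_j^{(k)}=\widehat{v}_j^{(k-1)} .
\]
Because the inner product is symmetric, $\mathcal{I}_k=\langle \widehat{v}_i^{(k)},\widehat{v}_j^{(k)}\rangle$ regardless of which index is $i$. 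This removes the need to split into odd and even $k$.

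Substituting and expanding bilinearly gives
\[
\mathcal{I}_k=\langle \widehat{v}_i^{(k-1)}+q_k\widehat{v}_j^{(k-1)},\,\widehat{v}_j^{(k-1)}\rangle=\langle \widehat{v}_i^{(k-1)},\widehat{v}_j^{(k-1)}\rangle+q_k\langle \widehat{v}_j^{(k-1)},\widehat{v}_j^{(k-1)}\rangle=\mathcal{I}_{k-1}+q_k\|\widehat{v}_j^{(k-1)}\|^2 .
\]
Here $\|\cdot\|^2$ is the squared Euclidean norm $x^2+y^2$, so $\langle v,v\rangle=\|v\|^2$. This is the second equality in \eqref{eq.recursive formula}. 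The first equality follows immediately because $\widehat{v}_j^{(k)}=\widehat{v}_j^{(k-1)}$.

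There is no real obstacle. The only point to check is that the s-state rows and the recursion in Proposition \ref{prop1} match the definition \eqref{eq.inner.product}, including the sign change in the $(1,2)$ entry. This holds because the rule $x_{12}^{(k)}=x_{12}^{(k-1)}-q_kx_{22}^{(k-1)}$ becomes $-x_{12}^{(k)}=-x_{12}^{(k-1)}+q_kx_{22}^{(k-1)}$. Likewise $x_{22}^{(k)}=x_{22}^{(k-1)}-q_kx_{12}^{(k-1)}$ equals $x_{22}^{(k-1)}+q_k\cdot(-x_{12}^{(k-1)})$. So the update really is "add $q_k$ times the other row," as Proposition \ref{prop1} states, and $\langle \widehat{v}_1^{(k)},\widehat{v}_2^{(k)}\rangle=x_{11}^{(k)}x_{21}^{(k)}-x_{12}^{(k)}x_{22}^{(k)}$ as required.
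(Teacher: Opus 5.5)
Your proof is correct and complete. Writing the step-$k$ update as $\widehat{v}_i^{(k)}=\widehat{v}_i^{(k-1)}+q_k\widehat{v}_j^{(k-1)}$, $\widehat{v}_j^{(k)}=\widehat{v}_j^{(k-1)}$ and expanding $\langle\widehat{v}_i^{(k)},\widehat{v}_j^{(k)}\rangle$ bilinearly gives both equalities at once, and your check of the sign in the $(1,2)$ entry confirms that the update really is row addition.

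The paper gives no proof of this proposition; it cites Wu and Xu. Your argument is the same bilinear expansion the paper uses in the proof of Lemma~\ref{lem1}, so it serves as a self-contained justification.
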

	
Proposition \ref{prop4} gives a recursive formula for $\mathcal{I}_k$ and tells us that ${\mathcal{I}_k} $ is monotonically increasing. By definition, $\mathcal{I} _0=-am<0$. If the inner product of the terminating s-state  is greater than 0, then there exist two s-states $\widehat{\mathcal{X}_{k_0}}$ and $\widehat{\mathcal{X}_{k_0+1}}$ satisfying $\mathcal{I}_{k_0}<0$ and $\mathcal{I}_{k_0+1}\geqslant0$; of course, it is also possible that the inner product does not change sign throughout the process. In this case, we can quickly determine the shortest vector.

\begin{proposition}\label{prop5}
If the inner product does not change sign, then $(a^{-1}, -1)$ is a shortest non-zero vector in $\Lambda(a,m)$.
\end{proposition}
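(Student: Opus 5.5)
The plan is to work only with the terminating s-state. Let $N$ be the terminating step of Qin's algorithm, so $x_{12}^{(N)}=1$. Write $u=x_{11}^{(N)}$, which is $a^{-1}$ modulo $m$, with $1<u<m$ as in \eqref{eq.u}. Then $\widehat v_1^{(N)}=(u,-1)$, and we write $\widehat v_2^{(N)}=(c,d)$ with $c=x_{21}^{(N)}\geq 1$ and $d=x_{22}^{(N)}\geq 1$. Positivity holds by the remark after \eqref{eq:even}, since $N\geq 1$.

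By Proposition \ref{prop4} the sequence $\mathcal I_k$ is nondecreasing, and $\mathcal I_0=-am<0$. So the hypothesis that the inner product never changes sign means $\mathcal I_N<0$. By \eqref{eq.inner.product} this reads $uc-d<0$, so $d>uc\geq u$, i.e.\ $d\geq u+1$. This single inequality is the key input.

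By Proposition \ref{prop2}, $\{\widehat v_1^{(N)},\widehat v_2^{(N)}\}$ is a basis of $\Lambda(a,m)$. Hence every nonzero lattice vector has the form
$$
w=s(u,-1)+t(c,d)=(su+tc,\,-s+td),\qquad (s,t)\neq(0,0).
$$
We show $\|w\|^2\geq u^2+1=\|(u,-1)\|^2$ by cases, replacing $w$ by $-w$ when convenient.

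\emph{Case $t=0$.} Then $\|w\|^2=s^2(u^2+1)\geq u^2+1$.

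\emph{Case $s=0$.} Then $\|w\|^2\geq c^2+d^2\geq 1+(u+1)^2>u^2+1$.

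\emph{Case $s,t\neq 0$ of the same sign.} Then $|su+tc|=|s|u+|t|c\geq u+1$, so $\|w\|^2\geq (u+1)^2>u^2+1$.

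\emph{Case $s,t\neq 0$ of opposite signs.} Take $t>0>s$. Then $-s+td=|s|+td\geq 1+d\geq u+2$, so again $\|w\|^2>u^2+1$.

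Therefore $(u,-1)$ is a shortest nonzero vector. Equivalently $(a^{-1},-1)$ is one, reading $a^{-1}$ as the representative $u$ produced by the algorithm.

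The only step that needs care is identifying the terminating first row as exactly $(u,-1)$, together with the sign facts $c,d\geq1$. Both come directly from the termination rule and the positivity remark in the paper. Once the inequality $d>uc$ is extracted from $\mathcal I_N<0$, the case analysis is routine.
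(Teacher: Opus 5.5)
Your proof is correct. The paper gives no argument of its own for Proposition~\ref{prop5}: it defers to Wu--Xu (pp.~16--17 of their paper) and then uses the statement as the ``no sign change'' half of Theorem~\ref{th:main}. Your argument is therefore a genuinely different, self-contained route.

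The paper handles the sign-change case by tracking row lengths across all the states, via Proposition~\ref{prop3}, Lemma~\ref{lem3} and the monotonicity of $L_k$. You instead work only with the terminating basis $\{(u,-1),(c,d)\}$ and expand an arbitrary lattice vector in it. The quadrant signs of the two basis vectors, together with the single inequality $d>uc$ coming from $\mathcal{I}_N<0$, do all the work.

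This needs only Proposition~\ref{prop2}, the positivity remark and the termination rule, and it bypasses Proposition~\ref{prop3} completely. Even the monotonicity from Proposition~\ref{prop4} is dispensable, since ``no sign change'' together with $\mathcal{I}_0<0$ already gives $\mathcal{I}_N<0$.

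It also yields slightly more than stated. Every case except $t=0$, $|s|=1$ gives a strict inequality, so $\pm(u,-1)$ are the only shortest vectors here.

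Finally, you never use $u<m$, but it does follow from Proposition~\ref{prop2}: $\det\widehat{\mathcal{X}_N}=ud+c=m$ with $c,d\geqslant1$. This justifies reading $x_{11}^{(N)}$ as the least positive residue of $a^{-1}$. Combined with $d\geqslant u+1$, it even gives $u^2+u+1\leqslant m$.
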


This proposition follows from \cite[pp.\,16--17]{WX2023}.
	
The purpose of this paper is to prove a conjecture proposed by Wu and Xu (see \cite[p.\,17]{WX2023}) and to give a necessary and sufficient condition for determining the exact position of the shortest non-zero vector.
	
\begin{theorem}\label{th:main}
When the inner product changes sign, the shortest non-zero vector of the $2$-dimensional lattices $\Lambda(a,m)$ must lie in one of the rows of the two s-states  $\widehat{\mathcal{X}_{k_0}}$ and $\widehat{\mathcal{X}_{k_0+1}}$ satisfying $\mathcal{I}_{k_0}<0$ and $\mathcal{I}_{k_0+1}\geqslant0$. When the inner product does not change sign, the shortest non-zero vector of the $2$-dimensional lattice $\Lambda(a,m)$ must lie in the first row of the terminating s-state  $\widehat{\mathcal{X}_N}$.
\end{theorem}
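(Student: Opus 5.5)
The plan is to reduce the first assertion to Lagrange--Gauss reduction applied to the two consecutive bases $\widehat{\mathcal{X}_{k_0}}$ and $\widehat{\mathcal{X}_{k_0+1}}$. The second assertion is immediate: by Proposition \ref{prop5}, $(a^{-1},-1)$ is a shortest vector. The first row of the terminating s-state $\widehat{\mathcal{X}_N}$ is $(x_{11}^{(N)},-1)$ with $x_{11}^{(N)}=u\equiv a^{-1} \pmod m$, so it is exactly this vector; to be safe I would check that the representative $u$ from \eqref{eq.u} matches the one used in Proposition \ref{prop5}.

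For the sign-change case, put $k=k_0+1$. Let $\mathbf u=\widehat{v}_{2-\delta_k}^{(k-1)}$ be the row that is not updated, $\mathbf v=\widehat{v}_{1+\delta_k}^{(k-1)}$ the row that is updated, and $q=q_k$. By \eqref{eq.vector update} the rows of the two s-states are $\mathbf u,\mathbf v$ and $\mathbf u,\mathbf v+q\mathbf u$. By Proposition \ref{prop2}, $(\mathbf u,\mathbf v)$ is a basis of $\Lambda(a,m)$. By Proposition \ref{prop4}, $\langle \mathbf u,\mathbf v+j\mathbf u\rangle=\mathcal{I}_{k_0}+j\|\mathbf u\|^2$. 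This is $<0$ at $j=0$ and $\geqslant 0$ at $j=q$, so the real number $\theta=-\mathcal{I}_{k_0}/\|\mathbf u\|^2$ lies in $(0,q]$. Let $j^*$ be the integer nearest to $\theta$, and set $\mathbf w=\mathbf v+j^*\mathbf u$. Then $|\langle \mathbf u,\mathbf w\rangle|\leqslant \|\mathbf u\|^2/2$ and $\mathbf w$ is the shortest vector on the line $\mathbf v+\mathbb{Z}\mathbf u$.

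The first concrete step uses Qin's division rule. Say $k$ is odd, so $\mathbf u=(x_{11},-x_{12})$ and $\mathbf v=(x_{21},x_{22})$ with $q=\lfloor (x_{22}-1)/x_{12}\rfloor$; the even case is symmetric. For $0<j<q$ the vector $\mathbf v+j\mathbf u=(x_{21}+jx_{11},\,x_{22}-jx_{12})$ has first coordinate $>x_{11}$. Its second coordinate is at least $x_{22}-(q-1)x_{12}>x_{12}$, because $x_{22}-qx_{12}\geqslant 1$. Hence $\|\mathbf v+j\mathbf u\|>\|\mathbf u\|$ for every intermediate $j$, so on the line $\mathbf v+\mathbb{Z}\mathbf u$ the only candidates that can beat $\mathbf u$ are the endpoints $j=0$ and $j=q$, i.e. rows of the two s-states.

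Next I would split into two cases.
\begin{itemize}
\item If $\|\mathbf u\|\leqslant\|\mathbf w\|$, then $(\mathbf u,\mathbf w)$ is Lagrange-reduced, so $\mathbf u$ (a row of both s-states) is a shortest vector.
\item If $\|\mathbf w\|<\|\mathbf u\|$, the previous paragraph forces $j^*\in\{0,q\}$, so $\mathbf w\in\{\mathbf v,\mathbf v+q\mathbf u\}$ is itself a row. It remains to show $\mathbf w$ is shortest. Every lattice vector is $s\mathbf u+t\mathbf w$. For $t=0$ the length is at least $\|\mathbf u\|>\|\mathbf w\|$. For $|t|=1$ the minimum over $s$ is attained at $\pm\mathbf w$ by the choice of $j^*$. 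For $|t|\geqslant 2$, the component orthogonal to $\mathbf u$ has length $|t|\,m/\|\mathbf u\|\geqslant 2m/\|\mathbf u\|$.
\end{itemize}

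The main obstacle is the case $|t|\geqslant 2$: the bound $2m/\|\mathbf u\|\geqslant\|\mathbf w\|$ is not automatic from $|\langle\mathbf u,\mathbf w\rangle|\leqslant\|\mathbf u\|^2/2$ alone. What is needed is $|\langle\mathbf u,\mathbf w\rangle|\leqslant\|\mathbf w\|^2/2$. I would try to derive it from the sign pattern of the entries: all entries of $\mathbf u$ and $\mathbf w$ are positive apart from the one minus sign in the first row. This lets one write $|\langle\mathbf u,\mathbf w\rangle|$ as a difference of two products of coordinates and compare each product with $\|\mathbf w\|^2$ coordinatewise, using $\|\mathbf w\|<\|\mathbf u\|$. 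If that fails, the fallback is to show that the earlier s-states $\widehat{\mathcal{X}_{k_0-1}},\dots$ have already produced $\mathbf w$ as an unchanged row, and to use the Lagrange-reduction step on that earlier pair instead.
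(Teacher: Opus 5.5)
Your proposal has a genuine gap at the $|t|\geqslant 2$ step, which you flagged yourself, and it is fixable. The rest is sound: the non-sign-change case via Proposition~\ref{prop5}, the analysis of the line $\mathbf v+\mathbb{Z}\mathbf u$, Case~1, and the $t=0$ and $|t|=1$ parts of Case~2. One small slip: when $k_0=0$ the first coordinate of $\mathbf v+\mathbf u$ equals $x_{11}$ rather than exceeding it, but the strict inequality in the second coordinate still gives $\|\mathbf v+j\mathbf u\|>\|\mathbf u\|$.

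The trouble is that the inequality you plan to prove at that step, $|\langle\mathbf u,\mathbf w\rangle|\leqslant\|\mathbf w\|^2/2$, is false in general, so no coordinatewise comparison can yield it. Take $a=59$, $m=65$. Qin's algorithm gives $q_1=1$, $q_2=9$, $q_3=1$. The s-state $\widehat{\mathcal{X}_2}$ has rows $(10,-5),(1,6)$ and $\widehat{\mathcal{X}_3}$ has rows $(10,-5),(11,1)$, with $\mathcal{I}_2=-20$ and $\mathcal{I}_3=105$. So $k_0=2$, $\mathbf u=(10,-5)$, $\mathbf v=(1,6)$, $q=1$, $\theta=20/125$ and $j^*=0$. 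This gives $\mathbf w=(1,6)$ with $\|\mathbf w\|^2=37<125=\|\mathbf u\|^2$. Yet $|\langle\mathbf u,\mathbf w\rangle|=20>37/2$. Indeed $\mathbf u+\mathbf w=(11,1)$ is shorter than $\mathbf u$, so $(\mathbf w,\mathbf u)$ is not Lagrange-reduced even though $\mathbf w$ is a shortest vector. Your fallback via earlier s-states is not yet an argument, so as written the proof is incomplete.

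\textbf{The repair.} You need much less than Lagrange-reducedness of $(\mathbf w,\mathbf u)$. By Lagrange's identity and Proposition~\ref{prop2}, $\|\mathbf u\|^2\|\mathbf w\|^2=m^2+\langle\mathbf u,\mathbf w\rangle^2$. So the needed bound $2m/\|\mathbf u\|\geqslant\|\mathbf w\|$ is equivalent to $|\langle\mathbf u,\mathbf w\rangle|\leqslant\sqrt3\,m$. This follows from the bound $|\langle\mathbf u,\mathbf w\rangle|\leqslant\|\mathbf u\|^2/2$ you already have, together with Qin's remainder constraints.
\begin{itemize}
\item For $k$ odd and $j^*=0$, the entries of $\mathcal{X}_{k_0}$ satisfy $x_{12}<x_{22}$ and $x_{21}<x_{11}$. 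Hence $x_{12}^2-x_{11}^2\leqslant x_{12}x_{22}-x_{11}x_{21}=|\mathcal{I}_{k_0}|\leqslant\tfrac12(x_{11}^2+x_{12}^2)$, which gives $x_{12}\leqslant\sqrt3\,x_{11}$. Therefore $|\mathcal{I}_{k_0}|\leqslant x_{12}x_{22}\leqslant\sqrt3\,x_{11}x_{22}\leqslant\sqrt3\,m$, since $m=x_{11}x_{22}+x_{12}x_{21}$.
\item For $k$ odd and $j^*=q$, the mirror-image computation works on the entries of $\mathcal{X}_k$, using $x_{22}\leqslant x_{12}$ and $x_{21}\geqslant x_{11}$. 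It gives $x_{11}\leqslant\sqrt3\,x_{12}$ and $\mathcal{I}_k\leqslant\sqrt3\,x_{12}x_{21}\leqslant\sqrt3\,m$.
\item The even case is symmetric.
\end{itemize}

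With this repair your local Lagrange-reduction argument is a complete proof, and it is a genuinely different route from the paper's. The paper argues globally. By Lemma~\ref{lem1}, $\min\{\|\widehat{v}_1^{(k)}\|,\|\widehat{v}_2^{(k)}\|\}$ is nonincreasing for $k\leqslant k_0$ and nondecreasing for $k\geqslant k_0+1$. It then rules out the vectors $\widehat{v}_1^{(k)}\pm\widehat{v}_2^{(k)}$ using Proposition~\ref{prop3} and Lemma~\ref{lem3}. Your route looks only at the two s-states at the sign change and does not need Proposition~\ref{prop3} or the algorithm's history.
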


It must be noted that a shortest non-zero vector must appear in the case of Theorem \ref{th:main}, and there may also other  shortest non-zero vectors appear in other s-states. The conclusion of Theorem \ref{th:main} when the inner product does not change sign is exactly Proposition \ref{prop5}, so only the case where the inner product changes sign needs to be considered.

\section{Lemmata}

\begin{lemma}\label{lem1}
For any $k>0$, we have
\begin{equation}\label{eq.1}
\| \widehat{v}_{1+\delta_k}^{(k)} \|^2 = \| \widehat{v}_{1+\delta_k}^{(k-1)} \|^2 + q_k ( \mathcal{I}_{k-1} + \mathcal{I}_k ).
\end{equation}
\end{lemma}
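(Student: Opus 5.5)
We will derive \eqref{eq.1} by expanding a squared norm using the update rule \eqref{eq.vector update}, and then rewrite the result using the recursion of Proposition \ref{prop4}. To lighten notation, write $u=\widehat{v}_{1+\delta_k}^{(k-1)}$ for the row that is updated at step $k$, and $w=\widehat{v}_{2-\delta_k}^{(k-1)}=\widehat{v}_{2-\delta_k}^{(k)}$ for the row that stays fixed. By \eqref{eq.vector update} the updated row is $\widehat{v}_{1+\delta_k}^{(k)}=u+q_k w$.

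First we expand the squared length:
$$
\|u+q_kw\|^2=\|u\|^2+2q_k\langle u,w\rangle+q_k^2\|w\|^2 .
$$
The inner product is symmetric, and $\{u,w\}$ is exactly the pair of rows of $\widehat{\mathcal{X}_{k-1}}$. Hence $\langle u,w\rangle=\mathcal{I}_{k-1}$ by \eqref{eq.inner.product}, and so
$$
\|\widehat{v}_{1+\delta_k}^{(k)}\|^2=\|u\|^2+2q_k\mathcal{I}_{k-1}+q_k^2\|w\|^2 .
$$

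Next we split $2q_k\mathcal{I}_{k-1}$ into $q_k\mathcal{I}_{k-1}+q_k\mathcal{I}_{k-1}$. By Proposition \ref{prop4}, $\mathcal{I}_k=\mathcal{I}_{k-1}+q_k\|w\|^2$, which gives
$$
q_k\mathcal{I}_{k-1}+q_k^2\|w\|^2=q_k\mathcal{I}_k .
$$
Substituting, the right-hand side becomes $\|u\|^2+q_k(\mathcal{I}_{k-1}+\mathcal{I}_k)$, which is \eqref{eq.1}.

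If one prefers not to rely on Proposition \ref{prop4}, the same identity follows from a direct computation. The new inner product is
$$
\langle u+q_kw,\,w\rangle=\langle u,w\rangle+q_k\|w\|^2 ,
$$
and this is exactly the recursion in Proposition \ref{prop4}.

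The argument contains no real obstacle. The only point needing care is the bookkeeping with $\delta_k$: one must check that $\mathcal{I}_k$ is the inner product of the new row $u+q_kw$ with the unchanged row $w$, whichever of the two rows is updated at step $k$. This holds because $\mathcal{I}_k$ is symmetric in the two rows of $\widehat{\mathcal{X}_k}$.
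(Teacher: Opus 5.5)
Your proposal is correct and matches the paper's proof: you expand $\|u+q_kw\|^2$ via the update rule, identify the cross term as $2q_k\mathcal{I}_{k-1}$, split it, and absorb $q_k\mathcal{I}_{k-1}+q_k^2\|w\|^2$ into $q_k\mathcal{I}_k$ using Proposition~\ref{prop4}. Your added remark that Proposition~\ref{prop4} itself follows from $\langle u+q_kw,w\rangle=\langle u,w\rangle+q_k\|w\|^2$ makes the argument self-contained.
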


\begin{proof}
By applying  \eqref{eq.vector update} and \eqref{eq.recursive formula}, we obtain
\begin{align*}
\|\widehat{v}_{1+\delta_{k}}^{(k)}\|^2
& = \|q_k\widehat{v}_{2-\delta_{k}}^{(k-1)} + \widehat{v}_{1+\delta_{k}}^{(k-1)}\|^2
= q_k^2\|\widehat{v}_{2-\delta_{k}}^{(k-1)}\|^2 + \|\widehat{v}_{1+\delta_{k}}^{(k-1)}\|^2 + 2q_k \mathcal{I}_{k-1} . \\
&=\|\widehat{v}_{1+\delta_{k}}^{(k-1)}\|^2+q_k(q_k\|\widehat{v}_{2-\delta_{k}}^{(k-1)}\|^2+\mathcal{I}_{k-1})+q_k\mathcal{I}_{k-1}\\
&=\| \widehat{v}_{1+\delta_k}^{(k-1)} \|^2 + q_k ( \mathcal{I}_{k-1} + \mathcal{I}_k ).
\qedhere
\end{align*}
\end{proof}

\begin{lemma}\label{lem3}
For any $0<k\leqslant N$, if $q_k = 1$, then we have
\begin{equation}\label{eq.main1}
\| \widehat{v}_1^{(k-1)} + \widehat{v}_2^{(k-1)} \| = \| \widehat{v}_{1+\delta_k}^{(k)} \|, \quad \| \widehat{v}_1^{(k)} - \widehat{v}_2^{(k)} \| = \| \widehat{v}_{1+\delta_k}^{(k-1)} \|.
\end{equation}
If $q_k \geqslant 2$, then we have
\begin{equation}\label{eq.main2}
\| \widehat{v}_1^{(k-1)} + \widehat{v}_2^{(k-1)} \| > \| \widehat{v}_{2-\delta_k}^{(k-1)} \|, \quad \| \widehat{v}_1^{(k)} - \widehat{v}_2^{(k)} \| > \| \widehat{v}_{2-\delta_k}^{(k)} \|.
\end{equation}
Meanwhile, $\| \widehat{v}_1^{(N)} + \widehat{v}_2^{(N)} \| > \| \widehat{v}_{2-\delta_k}^{(N)} \|$, $\| \widehat{v}_1^{(0)} - \widehat{v}_2^{(0)} \| > \| \widehat{v}_{2-\delta_k}^{(0)} \|$ is also true.
\end{lemma}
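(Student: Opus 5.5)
The plan is to argue directly from the coordinates of the s-states, using the sign pattern recorded after the description of Qin's algorithm: $\widehat{v}_1^{(k)}=(x_{11}^{(k)},-x_{12}^{(k)})$ and $\widehat{v}_2^{(k)}=(x_{21}^{(k)},x_{22}^{(k)})$, where all entries are positive except $x_{21}^{(0)}=0$. The other input is the update rule \eqref{eq.vector update}, which reads $\widehat{v}_{1+\delta_k}^{(k)}=\widehat{v}_{1+\delta_k}^{(k-1)}+q_k\widehat{v}_{2-\delta_k}^{(k-1)}$ while the other row is unchanged.

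\emph{Case $q_k=1$.} This is purely algebraic. Since $\{1+\delta_k,2-\delta_k\}=\{1,2\}$, we get $\widehat{v}_1^{(k-1)}+\widehat{v}_2^{(k-1)}=\widehat{v}_{1+\delta_k}^{(k-1)}+\widehat{v}_{2-\delta_k}^{(k-1)}=\widehat{v}_{1+\delta_k}^{(k)}$. Likewise $\widehat{v}_1^{(k)}-\widehat{v}_2^{(k)}=\pm\bigl(\widehat{v}_{1+\delta_k}^{(k)}-\widehat{v}_{2-\delta_k}^{(k)}\bigr)=\pm\widehat{v}_{1+\delta_k}^{(k-1)}$, because the row $\widehat{v}_{2-\delta_k}$ does not change at step $k$. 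Taking norms gives \eqref{eq.main1}.

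\emph{Case $q_k\geqslant 2$.} Here I would use the definition of the quotient, $q_k=\lfloor (x-1)/y\rfloor$, where $x$ is the entry being reduced and $y$ the divisor. From $q_k\geqslant2$ we get $x>2y$.
\begin{itemize}
\item For $k$ odd we have $x_{22}^{(k-1)}>2x_{12}^{(k-1)}$. The sum $\widehat{v}_1^{(k-1)}+\widehat{v}_2^{(k-1)}=(x_{11}+x_{21},\,x_{22}-x_{12})$ (entries at step $k-1$) then has first coordinate at least $x_{11}$ and second coordinate strictly bigger than $x_{12}$ in absolute value. Hence it is strictly longer than $\widehat{v}_1^{(k-1)}=\widehat{v}_{2-\delta_k}^{(k-1)}$.
\item For $k$ even the roles swap: $x_{12}>2x_{22}$ gives $|x_{22}-x_{12}|>x_{22}$ and $x_{11}+x_{21}\geqslant x_{21}$.
\item For the second inequality, $\widehat{v}_1^{(k)}-\widehat{v}_2^{(k)}=(x_{11}^{(k)}-x_{21}^{(k)},\,-(x_{12}^{(k)}+x_{22}^{(k)}))$. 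For $k$ odd the update gives $x_{21}^{(k)}\geqslant x_{21}^{(k-1)}+2x_{11}^{(k)}$, so $|x_{11}^{(k)}-x_{21}^{(k)}|\geqslant x_{11}^{(k)}$. Also $x_{12}^{(k)}+x_{22}^{(k)}>x_{12}^{(k)}$, so the difference is strictly longer than $\widehat{v}_1^{(k)}$.
\item For $k$ even, symmetrically $x_{11}^{(k)}\geqslant 2x_{21}^{(k)}+x_{11}^{(k-1)}$ gives $|x_{11}^{(k)}-x_{21}^{(k)}|\geqslant x_{21}^{(k)}$, while $x_{12}^{(k)}+x_{22}^{(k)}>x_{22}^{(k)}$.
\end{itemize}
In each item, positivity of every entry involved supplies the strictness.

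\emph{Boundary states.} For $k=0$ the claim is immediate. We have $\widehat{v}_1^{(0)}-\widehat{v}_2^{(0)}=(1,-(a+m))$, and this is strictly longer than both $\widehat{v}_1^{(0)}=(1,-a)$ and $\widehat{v}_2^{(0)}=(0,m)$. So the statement holds whichever index $2-\delta_k$ is meant.

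For the terminating state we have $x_{12}^{(N)}=1$, so $\widehat{v}_1^{(N)}+\widehat{v}_2^{(N)}=(x_{11}+x_{21},\,x_{22}-1)$. Compared with $\widehat{v}_1^{(N)}=(x_{11},-1)$, the required inequality reduces to $x_{22}\geqslant 3$, or to $x_{22}\geqslant 2$ together with $x_{21}>0$. This needs the fact that the final division left remainder $1$, which should force $x_{22}^{(N)}\geqslant2$ (a divisor equal to $1$ would not produce a fresh step). Compared with $\widehat{v}_2^{(N)}$, the difference of squared lengths is $x_{11}^2+2x_{11}x_{21}-2x_{22}+1$.

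The main obstacle is exactly this terminal comparison, in particular pinning down which row $2-\delta_N$ refers to from the parity of the last step. If it is $\widehat{v}_2^{(N)}$, I would try two routes to control $x_{22}$ by $x_{11}x_{21}$:
\begin{itemize}
\item the determinant identity $x_{11}^{(N)}x_{22}^{(N)}+x_{12}^{(N)}x_{21}^{(N)}=m$, which follows from Proposition~\ref{prop2};
\item viewing the state as if one more step with quotient $x_{22}-1$ were performed, and reusing the $q\geqslant2$ argument above.
\end{itemize}
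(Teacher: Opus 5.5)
\textbf{The gap is the terminal state, which you leave open.} Your treatment of $q_k=1$, of $q_k\geqslant 2$, and of the state $k=0$ is correct. It is essentially the paper's argument: the paper writes the difference of squared lengths as $\|\widehat v_{1+\delta_k}\|^2\pm 2\mathcal I$ and factors it coordinatewise, which is your coordinate comparison in another form.

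\emph{Which row is meant.} The index question has a definite answer. The entry $x_{12}$ changes only at even steps and starts at $a>1$, so the algorithm always stops at an even step. Hence $\delta_N=0$, and the required comparison is with $\widehat v_2^{(N)}$.

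\emph{The missing idea.} Since $x_{12}^{(N)}=1$, your difference $x_{11}^2+2x_{11}x_{21}-2x_{22}+1$ is exactly $\|\widehat v_1^{(N)}\|^2+2\mathcal I_N$. What you need is the sign of $\mathcal I_N$. The paper writes $\|\widehat v_1^{(N)}+\widehat v_2^{(N)}\|^2=\|\widehat v_1^{(N)}\|^2+\|\widehat v_2^{(N)}\|^2+2\mathcal I_N$ and uses $\mathcal I_N\geqslant 0$. This holds in the sign-change case, which is the only case in which the lemma is applied, because $\mathcal I_k$ is increasing by Proposition~\ref{prop4} and $\mathcal I_{k_0+1}\geqslant 0$.

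\emph{Your two routes cannot work without this hypothesis.} The inequality is false without it. For $a=6$, $m=11$ the algorithm stops at $N=2$ with s-state rows $(2,-1)$ and $(1,5)$, and $\mathcal I_2=-3$. Then $\widehat v_1^{(2)}+\widehat v_2^{(2)}=(3,4)$ has squared length $25<26=\|\widehat v_2^{(2)}\|^2$. The determinant identity holds in this example too, so it cannot yield the inequality. Your fictitious extra step would be odd, so it could only ever give a comparison with $\widehat v_1^{(N)}$.

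\textbf{A smaller error.} The last division does not force $x_{22}^{(N)}\geqslant 2$. A divisor equal to $1$ does produce a step, with quotient $x_{12}-1$ and remainder $1$. For example, with $a=2$, $m=5$ the terminating state has rows $(3,1)$ and $(2,1)$. You do not need this bound anyway. Since $x_{21}^{(N)}\geqslant 1$, we get $(x_{11}+x_{21})^2-x_{11}^2\geqslant 2x_{11}+1\geqslant 3$, which outweighs the loss $1-(x_{22}-1)^2\leqslant 1$ in the second coordinate. So $\|\widehat v_1^{(N)}+\widehat v_2^{(N)}\|>\|\widehat v_1^{(N)}\|$ holds unconditionally.

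That weaker statement is in fact all the proof of Theorem~\ref{th:main} uses, since it only needs each sum or difference to dominate some row of some state. So your argument would serve the application, but it does not prove the lemma as stated.
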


\begin{proof}
From formula \eqref{eq.vector update}, it follows that when $q_k=1$,
$$
\widehat{v}_{1+\delta_k}^{(k-1)} + \widehat{v}_{2-\delta_k}^{(k-1)}=\widehat{v}_{1+\delta_k}^{(k)},\qquad \widehat{v}_{2-\delta_k}^{(k)} = \widehat{v}_{2-\delta_k}^{(k-1)}.
$$
It is easy to deduce the above two norm relations.

When $q_k \geqslant 2$,

1. $\| \widehat{v}_1^{(k-1)} + \widehat{v}_2^{(k-1)} \|^2-\| \widehat{v}_{2-\delta_k}^{(k-1)} \|^2=\|\widehat{v}_{1+\delta_k}^{(k-1)}\|^2+2\mathcal{I}_{k-1}$.

From the definitions of the length and the inner product, we obtain
\begin{align*}
\|\widehat{v}_{1+\delta_k}^{(k-1)}\|^2+2\mathcal{I}_{k}
&=x_{{1+\delta_k},1}^{(k-1)^{2}}+x_{{1+\delta_k},2}^{(k-1)^{2}}+2(x_{11}^{(k-1)}x_{21}^{(k-1)} -x_{12}^{(k-1)} x_{22}^{(k-1)})\\
&=x_{{1+\delta_k},1}^{(k-1)}(x_{{1+\delta_k}1}^{(k-1)}+2x_{{2-\delta_k},1}^{(k-1)})+x_{{1+\delta_k},2}^{(k-1)}(x_{{1+\delta_k},2}^{(k-1)}-2x_{{2-\delta_k},2}^{(k-1)}),
\end{align*}
Since $q_k\geqslant2$,
$$
x_{{1+\delta_k},2}^{(k-1)}-2x_{{2-\delta_k},2}^{(k-1)}\geqslant x_{{1+\delta_k},2}^{(k-1)}-q_{k} x_{{2-\delta_k},2}^{(k-1)}=x_{{1+\delta_k},2}^{(k)}>0,
$$
hence $\| \widehat{v}_1^{(k-1)} + \widehat{v}_2^{(k-1)} \| > \| \widehat{v}_{2-\delta_k}^{(k-1)} \|$.

2. $\| \widehat{v}_1^{(k)} - \widehat{v}_2^{(k)} \|^2-\| \widehat{v}_{2-\delta_k}^{(k)} \|^2=\|\widehat{v}_{1+\delta_k}^{(k)}\|^2-2\mathcal{I}_{k}$.

From the definitions of the length and the inner product, we obtain
\begin{align*}
\|\widehat{v}_{1+\delta_k}^{(k)}\|^2-2\mathcal{I}_{k}
&=x_{{1+\delta_k},1}^{(k)^{2}}+x_{{1+\delta_k},2}^{(k)^{2}}-2(x_{11}^{(k)}x_{21}^{(k)} -x_{12}^{(k)} x_{22}^{(k)})\\
&=x_{{1+\delta_k},1}^{(k)}(x_{{1+\delta_k}1}^{(k)}-2x_{{2-\delta_k},1}^{(k)})+x_{{1+\delta_k},2}^{(k)}(x_{{1+\delta_k},2}^{(k)}+2x_{{2-\delta_k},2}^{(k)}),
\end{align*}
Since $q_k\geqslant2$,
$$
x_{{1+\delta_k},1}^{(k)}-2x_{{2-\delta_k},1}^{(k)}\geqslant x_{{1+\delta_k},1}^{(k)}-q_{k} x_{{2-\delta_k},1}^{(k)}=x_{{1+\delta_k},1}^{(k-1)}>0,
$$
hence $\| \widehat{v}_1^{(k)} - \widehat{v}_2^{(k)} \| > \| \widehat{v}_{2-\delta_k}^{(k)} \|$.

Meanwhile
$$
\| \widehat{v}_1^{(N)} + \widehat{v}_2^{(N)} \|^2 = \| \widehat{v}_1^{(N)} \|^2+\| \widehat{v}_2^{(N)}\|^2+2\mathcal{I}_N> \| \widehat{v}_{2-\delta_k}^{(N)} \|^2,
$$
$$
\| \widehat{v}_1^{(0)} - \widehat{v}_2^{(0)} \|^2 = \| \widehat{v}_1^{(0)} \|^2+\| \widehat{v}_2^{(0)}\|^2-2\mathcal{I}_0> \| \widehat{v}_{2-\delta_k}^{(0)} \|^2,
$$
This implies that $\| \widehat{v}_1^{(N)} + \widehat{v}_2^{(N)} \| > \| \widehat{v}_{2-\delta_k}^{(N)} \|$, \quad $\| \widehat{v}_1^{(0)} - \widehat{v}_2^{(0)} \| > \| \widehat{v}_{2-\delta_k}^{(0)} \|.$
\end{proof}

\section{Proof of Theorem \ref{th:main}}

In this section, we use the above conclusions to give the proof of Theorem \ref{th:main}.

\begin{proof}[Proof of Theorem  \ref{th:main}]
First, by Lemma \ref{lem3}, we have
$$
\|\widehat{v}_1^{(k)} + \widehat{v}_2^{(k)}\|, \|\widehat{v}_1^{(k)} - \widehat{v}_2^{(k)}\|\geqslant \min_{0\leqslant k \leqslant N}\{ \| \widehat{v}_1^{(k)} \|, \| \widehat{v}_2^{(k)} \| \},
\qquad 0\leqslant k \leqslant N.
$$	
Then, by Proposition \ref{prop3}, it follows that the shortest non-zero vector must be a row vector of some states. We make the following definitions.
For $k\leqslant k_0$, let $ L_k = \min\{ \| \widehat{v}_1^{(k)} \|, \| \widehat{v}_2^{(k)} \| \} $. For $k>k_0$, let $R_k=\min\{ \| \widehat{v}_1^{(k)} \|, \| \widehat{v}_2^{(k)} \| \}.$

We consider the following two cases, combined with Lemma \ref{lem1}.

(i) Suppose $ L_{k-1} = \| \widehat{v}_{2-\delta_k}^{(k-1)} \| $. Then we have
$$
L_{k-1} = \| \widehat{v}_{2-\delta_k}^{(k-1)} \| = \| \widehat{v}_{2-\delta_k}^{(k)} \| \geqslant L_k,
$$

(ii) Suppose $ L_{k-1} = \| \widehat{v}_{1+\delta_k}^{(k-1)} \| $. Then we have
$$
L_{k-1}^2 = \| \widehat{v}_{1+\delta_k}^{(k-1)} \|^2 = \| \widehat{v}_{1+\delta_k}^{(k)} \|^2 - q_k(\mathcal{I}_{k-1} + \mathcal{I}_k) > \| \widehat{v}_{1+\delta_k}^{(k)} \|^2 \geqslant L_k^2,
$$
which implies $L_{k-1} \geqslant L_{k}.$

Similarly, we have $R_k \geqslant R_{k-1}.$

(i) Suppose $ R_k = \| \widehat{v}_{1+\delta_k}^{(k)} \| $. Then we have
$$
R_k^2 = \| \widehat{v}_{1+\delta_k}^{(k)} \|^2 = \| \widehat{v}_{1+\delta_k}^{(k-1)} \|^2 + q_k(\mathcal{I}_{k-1} + \mathcal{I}_k) > \| \widehat{v}_{1+\delta_k}^{(k-1)} \|^2 \geqslant R_{k-1}^2,
$$

(ii) Suppose $ R_k = \| \widehat{v}_{2-\delta_k}^{(k)} \| $. Then we have
$$
R_k = \| \widehat{v}_{2-\delta_k}^{(k)} \| = \| \widehat{v}_{2-\delta_k}^{(k-1)} \| \geqslant R_{k-1}.
$$
From these two monotonic relations, we complete the proof of Theorem 1.
\end{proof}

\section{Determining the Exact Position of the Shortest Non-zero Vector}

First, we calculate $\| \widehat{v}_{1+\delta_{K+1}}^{(K+1)} \|^2-\| \widehat{v}_{2-\delta_{K+1}}^{(K+1)} \|^2$.
By Lemma \ref{lem1}, we obtain
\begin{align*}
\| \widehat{v}_{1+\delta_{K+1}}^{(K+1)} \|^2-\| \widehat{v}_{2-\delta_{K+1}}^{(K+1)} \|^2 &= \| \widehat{v}_{1+\delta_{K+1}}^{(K)} \|^2 + q_{K+1} ( \mathcal{I}_{K} + \mathcal{I}_{K+1} )-\| \widehat{v}_{2-\delta_{K+1}}^{(K)} \|^2\\
&=-(\| \widehat{v}_{1+\delta_{K}}^{(K)} \|^2-\| \widehat{v}_{2-\delta_{K}}^{(K)} \|^2)+q_{K+1} ( \mathcal{I}_{K} + \mathcal{I}_{K+1} ),
\end{align*}
which is a recursive formula. Applying this recursion repeatedly, we can obtain
\[
\begin{split}
\| \widehat{v}_{1+\delta_{K+1}}^{(K+1)} \|^2-\| \widehat{v}_{2-\delta_{K+1}}^{(K+1)} \|^2
&= q_{K+1}\mathcal{I}_{K+1}+\sum_{k=0}^{K}(-1)^{K-k}(q_{k+1}-q_{k})\mathcal{I}_{k} \\
&\quad + (-1)^{K}(a^2+1-m^2).
\end{split}
\]
where $q_{0}=0$.

We define
$$
D_{K+1}=q_{K+1}\mathcal{I}_{K+1}+\sum_{k=0}^{K}(-1)^{K-k}(q_{k+1}-q_{k})\mathcal{I}_{k}+(-1)^{K}(a^2+1-m^2).
$$
Based on this, we can give a method for determining the exact position of the shortest non-zero vector.

\begin{theorem}\label{th.main2}
If $|\mathcal{I}_{k_0}|\geqslant|\mathcal{I}_{k_0+1}|$,

(a) when $D_{k_0+1}\geqslant0$, the shortest non-zero vector is $\widehat{v}_{2-\delta_{k_0+1}}^{(k_0+1)}$;

(b) when $D_{k_0+1}\leqslant0$, the shortest non-zero vector is $\widehat{v}_{1+\delta_{k_0+1}}^{(k_0+1)}$.

If $|\mathcal{I}_{k_0}|\leqslant|\mathcal{I}_{k_0+1}|$,

(a) when $D_{k_0}\geqslant0$, the shortest non-zero vector is $\widehat{v}_{2-\delta_{k_0}}^{(k_0)}$;

(b) when $D_{k_0}\leqslant0$, the shortest non-zero vector is $\widehat{v}_{1+\delta_{k_0}}^{(k_0)}$.
\end{theorem}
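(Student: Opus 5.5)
By Theorem \ref{th:main}, when the inner product changes sign, a shortest non-zero vector is among the four rows of $\widehat{\mathcal{X}_{k_0}}$ and $\widehat{\mathcal{X}_{k_0+1}}$. By \eqref{eq.vector update} these two s-states share the row $w:=\widehat{v}_{2-\delta_{k_0+1}}^{(k_0)}=\widehat{v}_{2-\delta_{k_0+1}}^{(k_0+1)}$, so only three vectors are candidates:
\[
w,\qquad u:=\widehat{v}_{1+\delta_{k_0+1}}^{(k_0)},\qquad u':=\widehat{v}_{1+\delta_{k_0+1}}^{(k_0+1)}.
\]
Note that $\widehat{v}_{1+\delta_{k_0}}^{(k_0)}=u$ and $\widehat{v}_{2-\delta_{k_0}}^{(k_0)}$ is $w$. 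The first step is to compare $u$ with $u'$ using Lemma \ref{lem1} with $k=k_0+1$:
\[
\|u'\|^2-\|u\|^2=q_{k_0+1}(\mathcal{I}_{k_0}+\mathcal{I}_{k_0+1}).
\]
Since $q_{k_0+1}\geqslant 1$, $\mathcal{I}_{k_0}<0$ and $\mathcal{I}_{k_0+1}\geqslant0$, the sign of the right-hand side is the sign of $|\mathcal{I}_{k_0+1}|-|\mathcal{I}_{k_0}|$. Hence if $|\mathcal{I}_{k_0}|\geqslant|\mathcal{I}_{k_0+1}|$ then $\|u'\|\leqslant\|u\|$, and the shortest vector is $w$ or $u'$, i.e.\ a row of $\widehat{\mathcal{X}_{k_0+1}}$. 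In the opposite case $\|u\|\leqslant\|u'\|$, and the shortest vector is $w$ or $u$, i.e.\ a row of $\widehat{\mathcal{X}_{k_0}}$. Ties give equal lengths, which is why both inequalities may be non-strict.

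The second step decides between the two rows of the selected s-state. For this I will use that $D_{j}=\|\widehat{v}_{1+\delta_j}^{(j)}\|^2-\|\widehat{v}_{2-\delta_j}^{(j)}\|^2$ for every $j\geqslant1$, which is the closed form just derived. Then $D_j\geqslant0$ means the row $\widehat{v}_{2-\delta_j}^{(j)}$ is not longer, and $D_j\leqslant0$ means the row $\widehat{v}_{1+\delta_j}^{(j)}$ is not longer. Applied with $j=k_0+1$ in the first case and $j=k_0$ in the second, this yields exactly the four conclusions (a), (b). By the first step, the chosen row is a shortest vector among all three candidates, and hence in $\Lambda(a,m)$ by Theorem \ref{th:main}.

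The main obstacle is verifying the identity between $D_j$ and the length difference, including the base case. At $k=0$ we have $\widehat{v}_1^{(0)}=(1,-a)$ and $\widehat{v}_2^{(0)}=(0,m)$, and $\delta_0=0$, so the difference is $1+a^2-m^2$. With the convention $q_0=0$, I will check by induction on $K$ that unrolling the recursion
\[
E_{K+1}=-E_K+q_{K+1}(\mathcal{I}_K+\mathcal{I}_{K+1})
\]
reproduces the stated telescoped coefficients $(-1)^{K-k}(q_{k+1}-q_k)$ of $\mathcal{I}_k$. One also needs care when $k_0=0$. In that case $D_{k_0}$ should be read as $1+a^2-m^2<0$, which selects $u=(1,-a)$; this is consistent because $\|(1,-a)\|<m$.
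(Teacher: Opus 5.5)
Your plan follows the paper's proof step for step. Both use the shared row $w$, then Lemma~\ref{lem1} at $k=k_0+1$ to compare $u$ with $u'$ via the sign of $\mathcal{I}_{k_0}+\mathcal{I}_{k_0+1}$, then the sign of $D_j$ to choose between the two rows of the selected s-state. The first step is correct.

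The step that fails is the identity $D_j=\|\widehat{v}_{1+\delta_j}^{(j)}\|^2-\|\widehat{v}_{2-\delta_j}^{(j)}\|^2$. You accept it as ``the closed form just derived'' and plan to check only the coefficients of the $\mathcal{I}_k$. Those coefficients are right, but the constant term is not. Unrolling $E_{K+1}=-E_K+q_{K+1}(\mathcal{I}_K+\mathcal{I}_{K+1})$ from your correct base value $E_0=1+a^2-m^2$ gives the constant $(-1)^{K+1}(a^2+1-m^2)$, not $(-1)^{K}(a^2+1-m^2)$. Already at $K=0$ one has $E_1=q_1(\mathcal{I}_0+\mathcal{I}_1)-E_0$, whereas the displayed $D_1$ equals $q_1(\mathcal{I}_0+\mathcal{I}_1)+E_0$. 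In general $D_j=E_j+2(-1)^j(m^2-a^2-1)$ for $j\geqslant1$.

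The paper's proof rests on the same identity and has the same defect; with $D$ as written, the statement is false. Take $a=2$, $m=7$:
\begin{itemize}
\item $q_1=3$, $\widehat{\mathcal{X}_0}=\begin{pmatrix}1&-2\\0&7\end{pmatrix}$, $\widehat{\mathcal{X}_1}=\begin{pmatrix}1&-2\\3&1\end{pmatrix}$.
\item $\mathcal{I}_0=-14$ and $\mathcal{I}_1=1$, so $k_0=0$ and $|\mathcal{I}_0|\geqslant|\mathcal{I}_1|$.
\item The printed formula gives $D_1=3-42-44=-83\leqslant0$, so part (b) selects $\widehat{v}_2^{(1)}=(3,1)$, of squared length $10$.
\item But $\widehat{v}_1^{(1)}=(1,-2)\in\Lambda(2,7)$ has squared length $5$. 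Indeed the true difference is $E_1=3-42+44=5>0$.
\end{itemize}

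Your handling of $k_0=0$ already shows the sign problem. Reading $D_0$ as $1+a^2-m^2$ is what the corrected formula gives at $K=-1$; the printed formula would give $m^2-a^2-1>0$ there. That branch is in fact vacuous: $|\mathcal{I}_0|\leqslant|\mathcal{I}_1|$ would require $q_1(1+a^2)\geqslant 2am$. This is impossible, because $q_1a<m$ gives $q_1(1+a^2)<q_1+am<2am$.

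The repair is to replace the constant by $(-1)^{K+1}(a^2+1-m^2)$, or simply to define $D_j$ as the length difference. With that change your argument, like the paper's, is complete.

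One harmless slip: since $1+\delta_{k_0}=2-\delta_{k_0+1}$, you have $\widehat{v}_{1+\delta_{k_0}}^{(k_0)}=w$ and $\widehat{v}_{2-\delta_{k_0}}^{(k_0)}=u$, not the reverse. So for $k_0=0$ the vector $(1,-a)$ is $w$, not $u$. This does not affect anything, because you apply the $D_j$ criterion directly to the indexed rows.
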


\begin{proof}
The proof of the theorem mainly uses Lemma \ref{lem1}.

When $|\mathcal{I}_{k_0}|\geqslant|\mathcal{I}_{k_0+1}|$, this implies that $\mathcal{I}_{k_0}+\mathcal{I}_{k_0+1}\leqslant 0.$

From Lemma \ref{lem1} and  \eqref{eq.vector update}, we have
$$
\| \widehat{v}_{1+\delta_{k_0+1}}^{(k_0+1)} \|^2 = \| \widehat{v}_{1+\delta_{k_0+1}}^{(k_0)} \|^2 + q_{k_0+1} ( \mathcal{I}_{k_0} + \mathcal{I}_{k_0+1} )\leqslant \| \widehat{v}_{1+\delta_{k_0+1}}^{(k_0)} \|^2,
$$
$$
\|\widehat{v}_{2-\delta_{k_0+1}}^{(k_0+1)}\| = \|\widehat{v}_{2-\delta_{k_0+1}}^{(k_0)}\|,
$$
hence $R_{k_0+1}\leqslant L_{k_0}$.

Furthermore, we have
$$
\| \widehat{v}_{1+\delta_{k_0+1}}^{(k_0+1)} \|^2-\| \widehat{v}_{2-\delta_{k_0+1}}^{(k_0+1)} \|^2 =D_{k_0+1}.
$$
Then we prove the first half of the theorem.

When $|\mathcal{I}_{k_0}|\leqslant|\mathcal{I}_{k_0+1}|$, this implies that $\mathcal{I} _{k_0}+\mathcal{I}_{k_0+1}\geqslant 0.$

From Lemma \ref{lem1} and  \eqref{eq.vector update}, we obtain
$$
\| \widehat{v}_{1+\delta_{k_0+1}}^{(k_0+1)} \|^2 = \| \widehat{v}_{1+\delta_{k_0+1}}^{(k_0)} \|^2 + q_{k_0+1} ( \mathcal{I}_{k_0} + \mathcal{I}_{k_0+1} )\geqslant \| \widehat{v}_{1+\delta_{k_0+1}}^{(k_0)} \|^2,
$$
$$
\|\widehat{v}_{2-\delta_{k_0+1}}^{(k_0+1)}\| = \|\widehat{v}_{2-\delta_{k_0+1}}^{(k_0)}\|,
$$
hence $R_{k_0+1}\geqslant L_{k_0}$.

Similarly, we have
$$
\| \widehat{v}_{1+\delta_{k_0}}^{(k_0)} \|^2-\| \widehat{v}_{2-\delta_{k_0}}^{(k_0)} \|^2 =D_{k_0}.
$$
Then the second half of the theorem is proved.
\end{proof}

\section*{Acknowledgement}

I am grateful to Professor Guangwu Xu for his valuable suggestions which leads to a simplification on the proof of a preliminary manuscript. I thank Dr. Ke Gong for his guidance and many helps during the preparation of the present work. 


\end{document}